\newtheorem{corollary}{Corollary}[section]
\newtheorem{theorem}{Theorem}[section]
\newtheorem{lemma}{Lemma}[section]
\newtheorem{definition}{Definition}[section]
\numberwithin{equation}{section}
\theoremstyle{remark}
\begin{document}
\pagestyle{empty}
\title{Honors Thesis: On the faithfulness of the Burau representation at roots of unity}


\author{Thomas Chuna}
\address{Michigan State University}
\curraddr{}
\email{chunatho@msu.edu}
\thanks{A great deal of thanks to my advisors on this project, A. Parker from Wittenberg University and B. Winarski from University of Wisconsin-Milwaukee, for their help and guidance. You were both instrumental in the completion of my degree at Wittenberg University.}



\subjclass[2010]{Primary}

\date{5/26/16}


\begin{abstract}
We study the kernel of the evaluated Burau representation through the braid element $\sigma_1 \sigma_2 \sigma_1$. This element is significant as a part of the standard braid relation. We establish the form of this element's image raised to the $k^{th}$ power. Interestingly, the cyclotomic polynomials arise and can be used to define the expression. The main result of this paper is that the Burau representation of the braid group of $n$ strands for $n > 2$ is unfaithful at any $\tau^{th}$ primitive root of unity, such that $\tau$ is greater than $3$.
\end{abstract}

\maketitle

\section{Introduction}
\subsection{Mapping Class Groups}
By the hyper-elliptic involution, the disk of $3$ marked points, $D_3$, is the quotient of the closed orientable surface of genus one with one boundary, $\chi_1^1$. Thus $\chi_1^1$ serves as a two fold branched covering space of $D_3$. Not only are the surfaces related by a two fold branched cover, but their mapping class groups are related. Under the Birman-Hilden theorem the mapping class group of the disk is isomorphic to a subgroup of the Dehn Twists, (\textit{i.e.} $\chi_1^1$'s mapping class group). The mapping class group of $D_n$ is isomorphic to the braid group of $n$ strands \cite{3}. Artin's braid relations, defined on the braid group, are similar to relations defined on the Dehn twists. The map $\psi$ transfers the braid relation and the disjointness relation from $\text{MOD}(\chi_g^1)$ to the braid group of $2g+1$ marked points \cite{2}. Pictorially, these maps are presented in figure \ref{fig:Overview}.

\begin{figure}[h]
\centering
\includegraphics[width=0.5\textwidth]{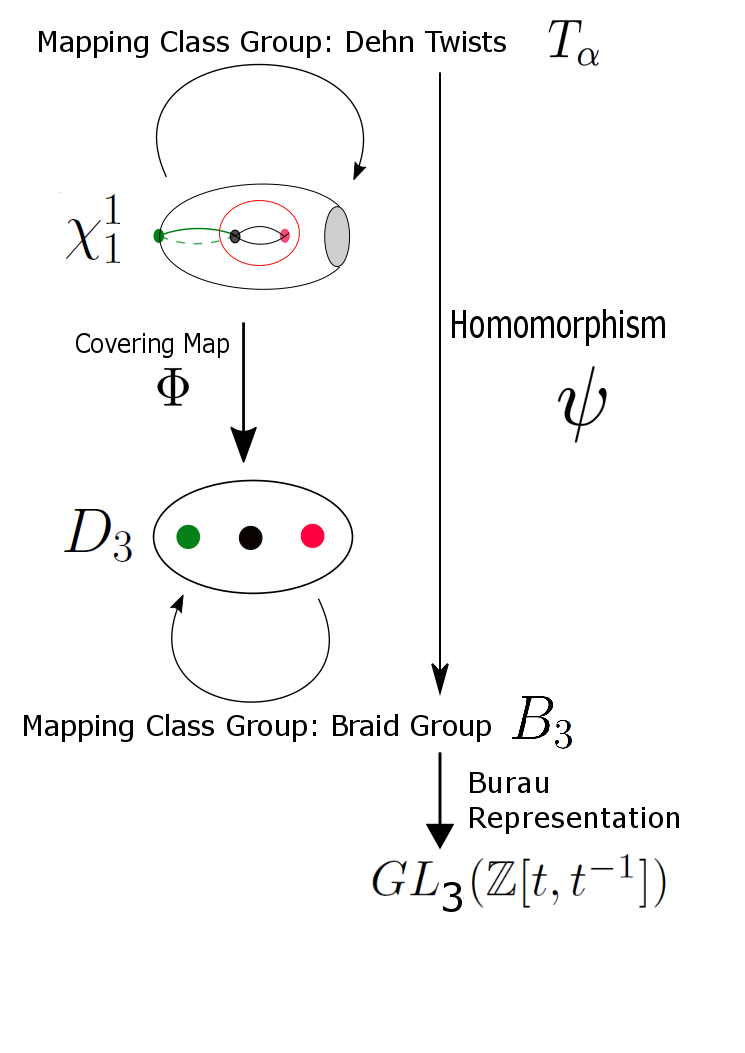}
\caption{\label{fig:Overview} A representation of the maps between topological surfaces.}
\end{figure}

\subsection{The Braid Group}
Let $B_n$ denote the braid group of $n$ strands. Let $\sigma_1 ,..., \sigma_{n-1}$ denote the standard generators of $B_n$ and $\sigma_1^{-1} ,\dots, \sigma_{n-1}^{-1}$ denote their inverses \ref{fig:genOfBn}. The Artin representation of the braid group of $n$-strands is defined \cite{1}:
\begin{definition}
$B_n =  \{ \sigma_1 ,..., \sigma_{n-1} \mid \sigma_i \sigma_j =  \sigma_j \sigma_i \, \text{ if } \, \lvert i - j \rvert> 1 \, , \, \sigma_i \sigma_{i+1} \sigma_i = \sigma_{i+1} \sigma_{i} \sigma_{i+1} \, \text{ for } \, 1 \le i \le n-2 \}$
\end{definition}
If left and right directions of \ref{fig:MOD(D_n)} are conceptualized as backward and forward motion in time, respectively, then the strands are tracing the motion of the marked points in the disk through time. The homeomorphism of $D_3$ presented in figure \ref{fig:MOD(D_n)} can be expressed as the combination of four generators, $\sigma_2^{-1} \sigma_2^{-1} \sigma_1 \sigma_2^{-1}$. Note these braid generators are written using standard functional composition notation (\textit{i.e.} $\sigma_1$ is applied second).
\begin{figure}[h]
\begin{subfigure}{.5\textwidth}
\centering
\includegraphics[width=0.7\textwidth]{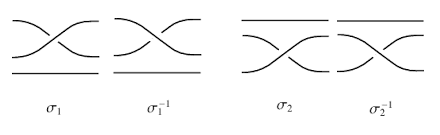}
\caption{\label{fig:genOfBn}The standard generators of $B_n$}
\label{fig:sub2}
\end{subfigure}
\centering
\begin{subfigure}{.5\textwidth}
\centering
\includegraphics[width=0.7\textwidth]{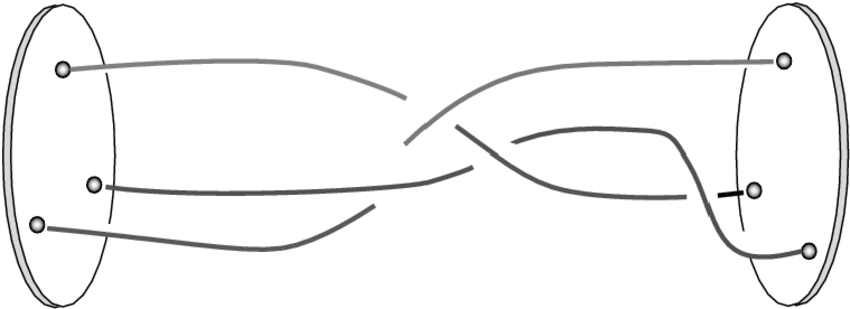}
\caption{\label{fig:MOD(D_n)} A homeomorphism, $\sigma_2^{-1} \sigma_1 \sigma_2^{-1} \sigma_2^{-1}$, from MOD($D_{n}$)}
\end{subfigure}
\caption{Expressing elements of SMOD($D_3$) as elements of $B_3$. [5][6]}
\label{fig:test}
\end{figure}

\section{Algebraic Structure under the Burau Representation}
As seen the braid group is a quantity of interest in group geometric theory. The Burau representation maps the braid group of $n$ strands to $GL_{n}\left(\mathbb{Z}[t,t^{-1}]\right)$. This maps is shown at the bottom of figure \ref{fig:Overview}. In this work, we study properties of mapping class groups via the general linear group. Let us define the Burau representation.
\begin{definition}
Let $I_k$ denote the $k \, \times \, k$ identity matrix. The Burau representation of the braid group is the map:
\begin{align*}
\rho : B_n \rightarrow GL_{n}(\mathbb{Z}[t,t^{-1}]) && \sigma_i \rightarrow I_{i-1} \bigoplus \begin{bmatrix} 
    1-t&t\\1&0 \end{bmatrix} \bigoplus I_{n-i-1}
\end{align*}
\end{definition} 
The algebraic structure created by the image of $\sigma_1 \sigma_2 \sigma_1$ under the Burau representation, seen in equation \ref{equ:zeta}, provides insight into the kernel of the evaluated Burau representation. The element $\sigma_1 \sigma_2 \sigma_1$ is selected for study because of its novelty in Artin's presentation of the braid group. However, other than exposing the element, the braid relation itself is not used in this work.
\begin{equation} \label{equ:zeta}
\rho (\sigma_{1} \sigma_2 \sigma_1) = \zeta = \begin{bmatrix} 1-t&(1-t)t&t^{2}\\1-t&t&0\\1&0&0& \end{bmatrix} 
\end{equation}
Immediately, there is nothing notable about this image. Regardless, as we shall show, the cyclotomic polynomials are exposed in $\zeta^k$ for even $k$. $\zeta^k$, has two different forms, one for odd $k$ and one for even $k$. The algebraic structure that arises in the even powers is enough to prove that the Burau representation is unfaithful when evaluated at any $\tau^{th}$ primitive root of unity, such that $\tau >3$. Using proof by induction, we establish the form for even $k$.
\begin{lemma} 
If $k \in \mathbb{N}$ such that $k$ is even, then the image of $(\sigma_{1} \sigma_2 \sigma_1)^k$ in $B_3$, under the Burau representation is:
\begin{equation}\label{equ:zetak}
\zeta^k = \begin{bmatrix} 
a_{\frac{3}{2}k}&t a_{\frac{3}{2}k-2}&t^2 a_{\frac{3}{2}k-2} 
\\a_{\frac{3}{2}k-2}&t a_{\frac{3}{2}k-1}&t^2 a_{\frac{3}{2}k-2}
\\a_{\frac{3}{2}k-2}&t a_{\frac{3}{2}k-2}&t^2 a_{\frac{3}{2}k-3}
\end{bmatrix}
\end{equation}
Let $\phi_i$ be the $i^{th}$ cyclotomic polynomial and $Z = \{ \, d \in \mathbb{N} \, \vert \, d \text{ divides } m+2 \, \}$. Define the polynomial $a_m$ such that:
\begin{align}\label{equ:am}
a_m &= \begin{cases} 
        \frac{1+t^{m+1}+t^{m+2}}{\phi_3},  \forall m \equiv 0 \text{ mod } 3
      \\  - \frac{1}{\phi_3} \prod\limits_{d \in Z}\phi_d  = \frac{1-t^{m+2}}{\phi_3}, \forall m \equiv 1 \text{ mod } 3
      \\  \frac{1+t^{m}+t^{m+2}}{\phi_3}, \forall m \equiv  2 \text{ mod } 3
\end{cases}
\end{align}
\end{lemma}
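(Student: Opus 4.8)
The plan is to prove \eqref{equ:zetak} by induction on $m$, writing $k=2m$, after collapsing the natural matrix recursion to a scalar one; the engine is the Cayley--Hamilton theorem applied to $\zeta$ itself.

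\emph{Step 1: a second-order recurrence for the even powers.} A direct computation gives $\operatorname{tr}\zeta=1$, $\det\zeta=-t^{3}$, and $e_{2}(\zeta)=-t^{3}$ (the sum of the principal $2\times 2$ minors), so the characteristic polynomial of $\zeta$ is $x^{3}-x^{2}-t^{3}x+t^{3}=(x-1)(x^{2}-t^{3})$, and Cayley--Hamilton yields $\zeta^{3}=\zeta^{2}+t^{3}\zeta-t^{3}I_{3}$. Multiplying this by $\zeta$ and substituting the relation once more, the $\zeta$-terms cancel and one obtains
\[
\zeta^{4}=(1+t^{3})\,\zeta^{2}-t^{3}I_{3}.
\]
Multiplying by $\zeta^{2m-2}$ gives, for every $m\ge 1$, the scalar-coefficient recurrence $\zeta^{2m+2}=(1+t^{3})\,\zeta^{2m}-t^{3}\,\zeta^{2m-2}$. (Conceptually, $\zeta^{2}$ is the image of a generator of the center of $B_{3}$ and satisfies $(\zeta^{2}-I_{3})(\zeta^{2}-t^{3}I_{3})=0$, but the displayed identity is just as easily checked by hand.)

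\emph{Step 2: the key identity for the $a_{n}$.} Next I would establish the single Laurent-polynomial identity $a_{n+3}=(1+t^{3})\,a_{n}-t^{3}\,a_{n-3}$, valid for all $n\in\mathbb{Z}$, by treating the three residue classes $n\equiv 0,1,2\pmod 3$ separately and substituting the corresponding branch of \eqref{equ:am}. Each case is a one-line cancellation; for example, when $n\equiv 1\pmod 3$ one has $\phi_{3}a_{n}=1-t^{n+2}$ and $(1+t^{3})(1-t^{n+2})-t^{3}(1-t^{n-1})=1-t^{n+5}=\phi_{3}a_{n+3}$. This is the only real computation in the argument, and it is short.

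\emph{Step 3: the induction.} The base cases are $k=0$, where $\zeta^{0}=I_{3}$ matches \eqref{equ:zetak} because $a_{0}=1$, $a_{-1}=t^{-1}$, $a_{-2}=0$, $a_{-3}=t^{-2}$, and $k=2$, where the explicit $\zeta^{2}$ of Step~1 is checked against \eqref{equ:zetak} with $\tfrac{3}{2}k=3$. Now assume \eqref{equ:zetak} for $k=2m-2$ and $k=2m$. Since $\tfrac{3}{2}(2m)=3m$ is divisible by $3$, the only indices occurring are $3m$, $3m-1$, $3m-2$, $3m-3$, with fixed residues $0,2,1,0\pmod 3$; moreover six of the nine entries carry the index $3m-2$, up to a factor $1$, $t$, or $t^{2}$. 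Feeding the two instances of \eqref{equ:zetak} into the recurrence of Step~1 and comparing entries, the claim for $\zeta^{2m+2}$ reduces, after cancelling common powers of $t$, to three instances of the identity of Step~2: with $n=3m$ (from the $(1,1)$ and $(3,3)$ entries), with $n=3m-1$ (from the $(2,2)$ entry), and with $n=3m-2$ (from the remaining six entries). Hence $\zeta^{2m+2}$ has the asserted form, and the induction closes.

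\emph{Where the difficulty lies.} There is no deep obstacle: everything rests on the observation that the even powers of $\zeta$ satisfy a degree-two scalar recurrence, which lets one three-case identity do all the work. The brute-force alternative, expanding $\zeta^{2m+2}=\zeta^{2m}\zeta^{2}$, is what one wants to avoid --- it produces nine entrywise identities involving \emph{products} of the $a_{n}$ together with the mod-$3$ casework. The one genuine care-point is the Laurent (negative-exponent) arithmetic in the $k=0$ base case, and remembering that the identity of Step~2 must hold for all integers $n$, not just $n\ge 3$.
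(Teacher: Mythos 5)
Your argument is correct, but it proceeds quite differently from the paper. The paper proves the lemma by direct induction in steps of two, expanding $\zeta^{k+2}=\zeta^{k}\zeta^{2}$ entrywise: it verifies the $(1,1)$ entry by an explicit cancellation involving products of the $a_{m}$ and the mod-$3$ casework, and then asserts that the remaining eight entries follow by similar computations. You instead observe that the characteristic polynomial of $\zeta$ is $(x-1)(x^{2}-t^{3})$, deduce via Cayley--Hamilton the scalar-coefficient recurrence $\zeta^{2m+2}=(1+t^{3})\zeta^{2m}-t^{3}\zeta^{2m-2}$, and reduce the whole induction to the single three-case identity $a_{n+3}=(1+t^{3})a_{n}-t^{3}a_{n-3}$, which I have checked in all residue classes. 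This buys a genuinely complete and shorter verification: because the recurrence is linear with scalar coefficients, every entry of $\zeta^{2m+2}$ is handled uniformly (the paper's ``similar arguments'' for the other eight entries are exactly what your Step 2 eliminates), and the same computations also recover the eigenvalue structure used implicitly later when solving $\zeta^{k}=I_{3}$. Two small points to tidy: the $(3,3)$ entry actually invokes the Step 2 identity with $n=3m-3$, not $n=3m$, so you use four instances of the identity (indices $3m,3m-1,3m-2,3m-3$) rather than three --- harmless, since the identity holds for all $n$; and your $k=0$ base case silently extends the definition of $a_{m}$ to negative indices, which works as you note, but you could avoid the Laurent arithmetic entirely by taking $k=2$ and $k=4$ as base cases (the $k=2$ matrix must in any case be computed explicitly, since your Step 1 only produces its characteristic data, not its entries).
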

\begin{proof}
Assume $k \in \mathbb{N}$ such that $k$ is even and consider the base case where $k=2$, then:
\begin{equation*}
\zeta^2 = \begin{bmatrix} 
  \frac{1}{\phi_3}( 1+t^4+t^5)&\frac{1}{\phi_3}(t(1-t^3))&\frac{1}{\phi_3}(t^2(1-t^3)) \\\frac{1}{\phi_3}(1-t^3)&\frac{1}{\phi_3}(t+t^3+t^5)&\frac{1}{\phi_3}(t^2(1-t^3))
\\\frac{1}{\phi_3}(1-t^3)&\frac{1}{\phi_3}(t(1-t^3))&\frac{1}{\phi_3}(t^2(1+t+t^2)  \end{bmatrix} = \begin{bmatrix}a_3&t a_1&t^2 a_1\\a_1&t a_2&t^2 a_1\\a_1&t a_1&t^2 a_0\end{bmatrix}
\end{equation*}
Assuming the induction hypothesis, we examine the form of $\zeta^{k+2}$.
\begin{equation*}
\zeta^{k+2} = \zeta^k*\zeta^2 = \begin{bmatrix} 
a_{\frac{3}{2}k}&t a_{\frac{3}{2}k-2}&t^2 a_{\frac{3}{2}k-2} 
\\a_{\frac{3}{2}k-2}&t a_{\frac{3}{2}k-1}&t^2 a_{\frac{3}{2}k-2}
\\a_{\frac{3}{2}k-2}&t a_{\frac{3}{2}k-2}&t^2 a_{\frac{3}{2}k-3}
\end{bmatrix}*\begin{bmatrix}a_3&t a_1&t^2 a_1\\a_1&t a_2&t^2 a_1\\a_1&t a_1&t^2 a_0\end{bmatrix}
\end{equation*}
Consider element [1,1] of the matrix $\zeta^k$, since $k$ is even then $\frac{3}{2}k \equiv 0$ mod $3$, $\frac{3}{2}k-1 \equiv 2$ mod $3$ and $\frac{3}{2}k-3 \equiv 0$ mod $3$. Explicitly multiplying the first row and the first column we get the $[1,1]$ element of the matrix.
\begin{align*}
[1,1] &= a_{\frac{3}{2}k}*a_3 + t a_{\frac{3}{2}k-2}*a_1 + t^2 a_{\frac{3}{2}k-2}*a_1
\\
&= \frac{1}{\phi_3}(1-t +t + t^2 - t^2 +t^3  - t^3 + t^{\frac{3}{2}k+1} -t^{\frac{3}{2}k+1} +t^{\frac{3}{2}k+2}-t^{\frac{3}{2}k+2}
\\& + t^{\frac{3}{2}k+2}- t^{\frac{3}{2}k+2} + t^{\frac{3}{2}k+3} -t^{\frac{3}{2}k+3}  +t^{\frac{3}{2}k+4}+t^{\frac{3}{2}k+5})
\\&= \frac{1}{\phi_3}(1+t^{\frac{3}{2}k+4}+t^{\frac{3}{2}k+5}),  \intertext{\textit{Using the definition of $a_m$}, from equation \ref{equ:am}.}
&= a_{\frac{3}{2}(k+2)}
\end{align*}
We see that the conjectured form of $\zeta^{k+2}$'s [1,1] matrix element holds. Similar arguments can be used to prove each of the other 8 elements of the even form. 
\end{proof}
\section{The Burau Representation's Kernel}
Now that the general form of $\zeta^{k}$ is established or even $k$. We set a constraint on the system, $\zeta^{k}=I$, to explore what roots yield the identity matrix. Given this constraint, it will be shown that any $\tau^{th}$ primitive root of unity, such that $\tau > 3$ is the solution to $\zeta^{k}=I$ for some even $k$. Furthermore, in corollary \ref{crl:root}, we determine the minimum $k$ such that the $\tau^{th}$ root of unity is a solution to $\zeta^{k}=I$.
\begin{theorem} \label{thrm:kernel}
If $t = e^{\frac{2 \pi \imath}{\tau}}$ then for any given $\tau \in \mathbb{N}$, such that $\tau > 3$, there exists $k \in \mathbb{Z}$ such that $\zeta^k = I_3$.
\end{theorem}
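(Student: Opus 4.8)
The plan is to apply the preceding lemma with one well-chosen even exponent and then simply read off the entries of the resulting matrix. The convenient choice is $k = 2\tau$: this is an even natural number, so equation \ref{equ:zetak} applies and expresses $\zeta^{2\tau}$ entirely in terms of the polynomials $a_m$ with indices $\tfrac{3}{2}k = 3\tau$, $3\tau-1$, $3\tau-2$, and $3\tau-3$. The entire argument then rests on the single observation that $t^{3\tau} = (t^{\tau})^{3} = 1$, which holds because $t = e^{\frac{2\pi\imath}{\tau}}$ is a $\tau$-th root of unity.

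First I would dispatch the six off-diagonal entries. By equation \ref{equ:zetak} each of them is a scalar multiple of $a_{\frac{3}{2}k-2} = a_{3\tau-2}$, and since $3\tau - 2 \equiv 1 \pmod{3}$, the middle case of equation \ref{equ:am} gives $a_{3\tau-2} = \frac{1 - t^{3\tau}}{\phi_3} = \frac{1-1}{\phi_3} = 0$. This cancellation needs the small remark that dividing by $\phi_3$ is legitimate here: a primitive $\tau$-th root of unity with $\tau > 3$ is not a root of $\phi_3 = 1 + t + t^{2}$, so $\phi_3(t) \neq 0$.

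Next I would evaluate the three diagonal entries. Because $k = 2\tau$ is even, $3\tau \equiv 0$, $3\tau - 1 \equiv 2$, and $3\tau - 3 \equiv 0 \pmod{3}$, so the appropriate branches of equation \ref{equ:am} yield
\begin{align*}
\bigl(\zeta^{2\tau}\bigr)_{1,1} &= a_{3\tau} = \tfrac{1}{\phi_3}\bigl(1 + t^{3\tau+1} + t^{3\tau+2}\bigr), \\
\bigl(\zeta^{2\tau}\bigr)_{2,2} &= t\,a_{3\tau-1} = \tfrac{t}{\phi_3}\bigl(1 + t^{3\tau-1} + t^{3\tau+1}\bigr), \\
\bigl(\zeta^{2\tau}\bigr)_{3,3} &= t^{2}\,a_{3\tau-3} = \tfrac{t^{2}}{\phi_3}\bigl(1 + t^{3\tau-2} + t^{3\tau-1}\bigr).
\end{align*}
Substituting $t^{3\tau}=1$ into each parenthesis collapses all three to $\frac{1+t+t^{2}}{\phi_3} = \frac{\phi_3}{\phi_3} = 1$. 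Combined with the vanishing of the off-diagonal entries, this gives $\zeta^{2\tau} = I_3$, so $k = 2\tau \in \mathbb{Z}$ witnesses the theorem.

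I do not anticipate a real obstacle once the lemma is available — the computation is essentially mechanical. The only two points needing care are (i) confirming $\phi_3(t) \neq 0$ for $\tau > 3$ before performing any cancellation, and (ii) keeping track of the residues modulo $3$ of the four indices so that the correct branch of the piecewise definition of $a_m$ is applied in each matrix entry. More generally, the same argument works for any even $k$ with $\tau \mid \tfrac{3}{2}k$; identifying the \emph{smallest} such $k$ (equivalently, the minimal power realizing a given root of unity) is the content of Corollary \ref{crl:root}, but for the existence statement here the explicit value $k = 2\tau$ suffices.
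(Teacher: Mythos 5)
Your proposal is correct and follows essentially the same route as the paper: apply the even-power lemma, kill the off-diagonal entries via $a_{\frac{3}{2}k-2}=\frac{1-t^{3k/2}}{\phi_3}=0$ (noting $\phi_3(t)\neq 0$), and reduce each diagonal entry to $\frac{1+t+t^2}{\phi_3}=1$, with $k=2\tau$ as the witness. The only difference is cosmetic — you fix $k=2\tau$ at the outset rather than first deriving the condition $t^{3k/2}=1$ and then choosing $k$, and you correctly record the residue $\frac{3}{2}k-2\equiv 1 \pmod 3$ where the paper has a typo.
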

\begin{proof} 
Let $k$ be even, first consider the off-diagonal terms. since then $\frac{3}{2}k-2 \equiv 0$ mod $3$, the form of equation \ref{equ:zetak}, implies that the off-diagonal elements are proportional to $a_{\frac{3}{2}k-2}$. Thus, for $\zeta^k = I_3$ the definition \ref{equ:am} implies the off-diagonal elements yield $\frac{1-t^{\frac{3k}{2}}}{\phi_3} =0$. After redistributing terms we see that as long as $t$ is not evaluated at the roots of $\phi_3$ then:
\begin{equation} \label{equ:offdiagonal}
t^{\frac{3k}{2}} = 1
\end{equation}

We shall use equation \ref{equ:offdiagonal} to show that $\zeta^k = I$.  Consider the diagonal elements of the even form. Since $k$ is even then $\frac{3}{2}k \equiv 0$ mod $3$, $\frac{3}{2}k-1 \equiv 2$ mod $3$ and $\frac{3}{2}k-3 \equiv 0$ mod $3$. Thus the diagonal elements can be evaluated as: 

\begin{align*}
a_{\frac{3}{2}k} &=  \frac{1+t^{\frac{3}{2}k+1}+t^{\frac{3}{2}k+2}}{\phi_3} =  \frac{ 1+(t)+(t^2)}{\phi_3}  = \frac{1+t+t^2}{\phi_3} 
\\&= 1
\\t a_{\frac{3}{2}k-1} &= t * \frac{1+t^{\frac{3}{2}k-1}+t^{\frac{3}{2}k+1}}{\phi_3} = t* \frac{1+(t^{-1})+(t)}{\phi_3}=\frac{1+ t +t^2}{\phi_3}
\\&= 1
\\t^2 a_{\frac{3}{2}k-3} &= t^2 * \frac{1+t^{\frac{3}{2}k-2}+t^{\frac{3}{2}k-1}}{\phi_3} = t^2 * \frac{1+t^{-2} + t^{-1}}{\phi_3} = \frac{1+t+t^2}{\phi_3}
\\&= 1
\end{align*}
In conclusion, if $t^{\frac{3k}{2}} = 1$ then $\zeta^k = I_3$. 

Additionally, it was assumed that $t = e^{\frac{2 \pi \imath}{\tau}}$ this implies $t^{\tau}=1$. Comparing this to the off-diagonal constraint, $t^{\frac{3}{2}k}=1$. Both equations are satisfied if and only if $\frac{3}{2}k = j*\tau$ where $j \in \mathbb{Z}$. Thus, $k$ must be divisible by $2$ and $\tau$. Trivially, this is satisfied by $k = 2 \tau$. Therefore, given any integer $\tau$ in $t = e^{\frac{2 \pi \imath}{\tau}}$ there exists a $k$ such that $\zeta^k = I_3$.
\end{proof}
\begin{corollary} \label{crl:root}
For a given root of unity, $\tau$, the minimum power, $k$, that makes $(\sigma_1 \sigma_2 \sigma_1)^k$ equal to the identity matrix is  $k=\frac{2}{3}\tau$ for either $\tau \equiv 0 \, \text{mod} \, 3$ and $k=2\tau$ for $\tau \equiv 1 \, \text{mod} \, 3$ or $\tau \equiv 2 \, \text{mod} \, 3$.
\end{corollary}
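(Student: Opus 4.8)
The plan is to derive the corollary from Theorem~\ref{thrm:kernel}, whose proof in fact establishes that for \emph{even} $k$ one has $\zeta^k = I_3$ if and only if $t^{3k/2}=1$ (equivalently $2\tau \mid 3k$), together with an elementary divisibility optimization. The first task, though, is to show that the minimizing power $k$ is necessarily even, since the even-power description in equations~\ref{equ:zetak} and~\ref{equ:am} says nothing about odd powers.

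To rule out odd powers I would argue by determinants. Note $\det\zeta = \det\rho(\sigma_1\sigma_2\sigma_1) = \bigl(\det\rho(\sigma_1)\bigr)^3 = (-t)^3 = -t^3$, hence $\det\zeta^k = (-1)^k t^{3k}$. If $\zeta^k = I_3$ for some $k$, then also $\zeta^{2k}=I_3$, and since $2k$ is even Theorem~\ref{thrm:kernel} gives $t^{3k}=1$; therefore $\det\zeta^k = (-1)^k$, which must equal $1$, forcing $k$ even. (For $\tau>3$, the number $t=e^{2\pi\imath/\tau}$ is a primitive $\tau$-th root of unity and so is never a root of $\phi_3$, so the exceptional hypothesis in Theorem~\ref{thrm:kernel} is automatically satisfied throughout.)

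With $k$ known to be even, Theorem~\ref{thrm:kernel} reduces the problem to finding the least even $k$ with $2\tau \mid 3k$. Writing $k=2m$, this is $\tau \mid 3m$, and I would split on $\gcd(3,\tau)$. If $\tau \equiv 0 \pmod 3$, write $\tau = 3s$; then $\tau \mid 3m \iff s \mid m$, so the least $m$ is $s$ and the least $k$ is $2s = \frac{2}{3}\tau$. If $\tau \equiv 1$ or $2 \pmod 3$, then $\gcd(3,\tau)=1$, so $\tau \mid 3m \iff \tau \mid m$, the least $m$ is $\tau$, and the least $k$ is $2\tau$. In each case the resulting $k$ is even with $t^{3k/2}=1$ (namely $t^\tau=1$ or $t^{3\tau}=1$), so Theorem~\ref{thrm:kernel} confirms $\zeta^k=I_3$, and minimality is built into the choice of $m$.

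The step I expect to carry the real weight is the exclusion of odd $k$: a priori the evaluated matrix $\zeta$ could have odd order at some root of unity, and the even-power form of equation~\ref{equ:zetak} gives no leverage there. The determinant identity $\det\zeta = -t^3$ is what resolves it cleanly; everything after that is routine divisibility bookkeeping together with the check that the claimed $k$ actually kills $\zeta$.
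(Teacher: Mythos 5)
Your argument is correct, and its core is the same reduction the paper uses: extract from the proof of Theorem~\ref{thrm:kernel} the conditions $t^{\tau}=1$ and $t^{3k/2}=1$ for even $k$, then minimize $k$ by a divisibility split on $\tau \bmod 3$. Where you genuinely depart from (and improve on) the paper is the treatment of odd powers: the paper's Lemma and Theorem only describe $\zeta^k$ for even $k$, and its proof of Corollary~\ref{crl:root} simply never considers whether some odd $k$ could satisfy $\zeta^k=I_3$, so its minimality claim is tacitly assuming the order of $\zeta$ is even. Your determinant argument closes exactly that gap: $\det\zeta=-t^3$ gives $\det\zeta^k=(-1)^k t^{3k}$, and combining $\zeta^{2k}=I_3$ (even power, hence $t^{3k}=1$ by the off-diagonal entries of equation~\ref{equ:zetak}, legitimate since $t$ is not a root of $\phi_3$ when $\tau>3$) with $\det\zeta^k=1$ forces $k$ even. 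Your divisibility bookkeeping is also cleaner than the paper's: the paper sets $\tfrac{3}{2}k=\tau$ rather than a general multiple $j\tau$, and in the case $\tau=3l$ asserts that the least $k$ "divisible by $2$ and $l$" is $2l$ (which as stated conflates $\mathrm{lcm}(2,l)$ with $2l$, even though the final answer $k=2l$ is right), whereas your substitution $k=2m$ and split on $\gcd(3,\tau)$ gives the correct condition $2l\mid k$ directly. In short: same skeleton as the paper, but your version actually proves minimality over all integers $k$, not just even ones.
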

\begin{proof}
In theorem \ref{thrm:kernel} we encountered two conditions. First, that $t^{\frac{3k}{2}} = 1$ and second that $t = e^{\frac{2 \pi \imath}{\tau}}$ which implied that $t^{\tau}=1$. Together these constraints showed that $t^{\frac{3}{2}k}=1$ when $\frac{3}{2}k = j*\tau$ such that $j \in \mathbb{N}$ (i.e. $\frac{3}{2}k \text{ is a multiple of } \tau$). We will now classify the minimal $k$ via $\tau \text{ mod } 3$:
\begin{enumerate}
\item $\tau = 3l$

Given $\tau$ is also equal to $\frac{3}{2}k$, then $3l = \frac{3}{2}k$. Since both sides are divisible by $3$, $l = \frac{k}{2}$. Therefore $k$ must be divisible by $2$ and $l$. The smallest term divisible by both $2$ and $l$ is $2l$. Given that $3l=\tau$, then $k=\frac{2}{3}\tau$ is the minimum $k$ which yields the identity.

\item $\tau = 3l+1$ or $\tau = 3l+2$ 

Given $\tau$ is also equal to $\frac{3}{2}k$, then $3l+1 = \frac{3}{2}k$ or $3l+2 = \frac{3}{2}k$. Neither value of $\tau$ is divisible by $3$. Therefore k must be divisible by both $2$ and $\tau$. The smallest term divisible by both $2$ and $\tau$ is $2 \tau$, thus $k = 2\tau$ is the minimum $k$ which yields the identity.
\end{enumerate} 
\end{proof}
It has now been shown that two distinct elements map to the identity matrix when the Burau representation is evaluated at any primitive root of unity excepting the first three. This is a novel result and provides the necessary strength to show the unfaithfulness of Burau representation. In the above proofs, we used the specific element $\sigma_1 \sigma_2 \sigma_1 \in B_3$. Therefore, once we have shown the unfaithfulness for $B_3$ in corollary \ref{crl:B3faith}, the result will be generalized to $B_n \text{,  } \forall n \geq 3$ in corollary \ref{crl:Bnfaith}.

\begin{corollary}\label{crl:B3faith}
If the Burau representation of $B_3$ is evaluated at any primitive root of unity, excepting the first three, then the representation is unfaithful.
\end{corollary}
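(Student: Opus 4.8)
The plan is to exhibit a single nontrivial element of $B_3$ that is killed by the Burau representation specialized at $t = e^{2\pi\imath/\tau}$ for $\tau > 3$; producing one such element already witnesses non-injectivity, hence unfaithfulness. The natural candidate is the braid $\beta = (\sigma_1\sigma_2\sigma_1)^k$, because Theorem \ref{thrm:kernel} tells us precisely that for every $\tau > 3$ there is a positive integer $k$ (for instance $k = 2\tau$, or the minimal value catalogued in Corollary \ref{crl:root}) with $\rho(\beta) = \zeta^k = I_3$. So the only gap to fill is the purely group-theoretic fact that $\beta \neq 1$ in $B_3$.

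To close that gap I would invoke the total-exponent homomorphism $e : B_n \to \mathbb{Z}$ determined on generators by $e(\sigma_i) = 1$. This is well defined since both the commuting relations and the braid relations in Artin's presentation preserve exponent sum (each side of $\sigma_i\sigma_{i+1}\sigma_i = \sigma_{i+1}\sigma_i\sigma_{i+1}$ has exponent sum $3$, each side of $\sigma_i\sigma_j = \sigma_j\sigma_i$ has exponent sum $2$). Applying $e$ to $\beta$ gives $e\big((\sigma_1\sigma_2\sigma_1)^k\big) = 3k$, and since the $k$ furnished by Theorem \ref{thrm:kernel} can be taken $\ge 1$, we get $e(\beta) = 3k \neq 0$, so $\beta$ is not the identity of $B_3$. (Alternatively one could cite that $B_3$ is torsion-free and that $\sigma_1\sigma_2\sigma_1$ is the half-twist $\Delta$, which has infinite order, but the exponent-sum argument is self-contained and avoids importing outside facts.)

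Putting the pieces together: for $t = e^{2\pi\imath/\tau}$ with $\tau > 3$, the element $\beta = (\sigma_1\sigma_2\sigma_1)^k$ is a nontrivial element of $B_3$ satisfying $\rho(\beta) = I_3$ at this value of $t$, so the evaluated Burau representation of $B_3$ has nontrivial kernel and is therefore unfaithful. Since $\tau$ was an arbitrary index beyond the first three, this covers every primitive root of unity except the first three.

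The ``hard part'' here is really just bookkeeping rather than mathematics: one must be careful to distinguish the group element $\beta \in B_3$ from its matrix image $\zeta^k$, and to make sure the exponent $k$ produced by Theorem \ref{thrm:kernel} is taken strictly positive (which it always can be, $k = 2\tau$ working uniformly, and the smaller $k = \tfrac{2}{3}\tau$ working when $3 \mid \tau$). Once the matrix identity $\zeta^k = I_3$ from the previous section is in hand, no estimate or new construction is needed.
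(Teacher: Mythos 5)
Your proposal is correct and follows essentially the same route as the paper: invoke Theorem \ref{thrm:kernel} to get $\zeta^k = I_3$ at $t = e^{2\pi\imath/\tau}$ with $\tau > 3$, and conclude that the evaluated representation has nontrivial kernel, hence is unfaithful. You are in fact slightly more careful than the paper, which asserts nontriviality of the kernel without checking that $(\sigma_1\sigma_2\sigma_1)^k \neq 1$ in $B_3$; your exponent-sum homomorphism argument supplies exactly that small but necessary verification.
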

\begin{proof}
In theorem \ref{thrm:kernel} we showed that if $t$ is evaluated at any root of unity,  excepting the first three, then there exists a $k$ such that $\zeta^k = I_3$. This implies the kernel of the Burau representation is non-trivial. Therefore the map is unfaithful.
\end{proof}
\begin{corollary}\label{crl:Bnfaith}
For all $n \ge 3$, the Burau representation of $B_n$ evaluated all roots of unity  excepting the first three is unfaithful. 
\end{corollary}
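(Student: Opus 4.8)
The plan is to deduce the statement from the $n=3$ case established in Corollary \ref{crl:B3faith}, by pushing forward a kernel element along the natural inclusion of $B_3$ into $B_n$. Fix $n \ge 3$ and let $\iota \colon B_3 \to B_n$ be the homomorphism determined on generators by $\sigma_1 \mapsto \sigma_1$ and $\sigma_2 \mapsto \sigma_2$; this is well defined since every defining relation of $B_3$ occurs among the defining relations of $B_n$. Write $\rho_n$ for the Burau representation of $B_n$ and $\rho_3$ for that of $B_3$. Directly from the definition of $\rho$, for $i \in \{1,2\}$ we have
\[
\rho_n(\sigma_i) \;=\; I_{i-1} \bigoplus \begin{bmatrix} 1-t & t \\ 1 & 0 \end{bmatrix} \bigoplus I_{n-i-1} \;=\; \rho_3(\sigma_i) \bigoplus I_{n-3},
\]
so every generator of $\iota(B_3)$ is block diagonal with the fixed splitting $n = 3 + (n-3)$. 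Since such block-diagonal matrices form a subgroup of $GL_n(\mathbb{Z}[t,t^{-1}])$, it follows that $\rho_n(\iota(w)) = \rho_3(w) \bigoplus I_{n-3}$ for every $w \in B_3$.

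Next I would invoke Theorem \ref{thrm:kernel}: evaluating at $t = e^{\frac{2\pi \imath}{\tau}}$ with $\tau > 3$, there is an even $k \in \mathbb{Z}$ (for instance $k = 2\tau$) with $\rho_3\big((\sigma_1 \sigma_2 \sigma_1)^k\big) = \zeta^k = I_3$. Applying the block identity above to $w = (\sigma_1 \sigma_2 \sigma_1)^k$ gives $\rho_n\big(\iota(w)\big) = I_3 \bigoplus I_{n-3} = I_n$, so $\iota(w)$ lies in the kernel of the evaluated $\rho_n$. To see that $\iota(w)$ is nontrivial, apply the abelianization map $B_n \to \mathbb{Z}$ sending each $\sigma_j$ to $1$: it sends $\iota(w)$ to its exponent sum $3k \ne 0$, hence $\iota(w) \ne 1$ in $B_n$. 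Therefore the kernel of the evaluated Burau representation of $B_n$ is nontrivial for every $n \ge 3$ and every $\tau > 3$, which is the assertion of the corollary.

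The one place I would be careful is the multiplicativity of the block decomposition, i.e.\ the claim that $\rho_n(\iota(B_3))$ consists entirely of matrices block diagonal for the splitting $3 + (n-3)$, so that products in $GL_n$ restrict to products of the upper-left $3\times 3$ blocks. This is exactly what licenses transporting the relation $\zeta^k = I_3$ from $B_3$ to $B_n$, but it is immediate once one observes that $\rho_n(\sigma_1)$ and $\rho_n(\sigma_2)$ individually have this form and that block-diagonal matrices with a fixed partition are closed under multiplication; no genuine obstacle arises, and the result for $B_n$ follows at once from the result for $B_3$.
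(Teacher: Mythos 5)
Your proof is correct and takes essentially the same approach as the paper: both reduce to the $B_3$ case by observing that the evaluated Burau image of the relevant word in $B_n$ is block diagonal (the $3\times 3$ Burau block of $\sigma_1\sigma_2\sigma_1$ padded by identity blocks), so its powers are computed blockwise and Theorem \ref{thrm:kernel} supplies a nontrivial kernel element. Your explicit verification via the exponent-sum homomorphism that $(\sigma_1\sigma_2\sigma_1)^k$ is nontrivial in $B_n$ is a welcome detail that the paper leaves implicit.
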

\begin{proof}
We must consider that $\sigma_1 \sigma_2 \sigma_1$ does not exist in $B_1$ or $B_2$. Limiting ourselves to $n \geq 3$, the image of $\sigma_i \sigma_{i+1} \sigma_i \in B_n$ is expressed as the segmented matrix:
\begin{equation}
\zeta = \begin{bmatrix} \begin{array}{c|c|c}
I_{i-1} & 0 & 0 \\   \hline
0& \begin{smallmatrix} 1-t&(1-t)t&t^{2} \\1-t&t&0\\1&0&0 \end{smallmatrix} & 0 \\ 
  \hline
  0 & 0 & I_{n-(i+2)}
 \end{array} \end{bmatrix}
\end{equation}
When this image is expressed as a segmented matrix it is easy to see that under matrix multiplication $\zeta^k =  I_{i-1} \bigoplus \begin{bmatrix} 1-t&(1-t)t&t^{2} \\1-t&t&0\\1&0&0 \end{bmatrix}^k  \bigoplus I_{n-(i+2)}$.
This expression clearly holds the same general form from equation \ref{equ:zetak}, which was proven in theorem \ref{thrm:kernel}. Thus, corollary \ref{crl:B3faith} extends to all braid groups with three or more strands.
\end{proof}

\section{Results}
In this paper, we studied the braid group using representation theory. It was established in theorem \ref{thrm:kernel} that the image of $\sigma_1 \sigma_2 \sigma_2$ generates a cyclic group. The periodicity of the generated group was explicitly determined in corollary \ref{crl:root}. Using this periodicity it was shown that the evaluated Burau representation of any braid group which contains the braid relation is unfaithful when evaluated at primitive roots of unity excepting the first three (corollary \ref{crl:Bnfaith}). Lastly, this unfaithfulness was extended to all braid groups which contain $\sigma_1 \sigma_2 \sigma_2$.

\bibliographystyle{amsplain}

\end{document}